\newtheorem{theorem}{Theorem}[section]
\newtheorem{lemma}[theorem]{Lemma}
\newtheorem{corollary}[theorem]{Corollary}
\theoremstyle{definition}
\newtheorem{example}[theorem]{Example}
\theoremstyle{remark}
\numberwithin{equation}{section}
\begin{document}
\setcounter{page}{1}


\title[characterization of Jordan centralizers and Jordan two-sided ...]{characterization of Jordan centralizers and Jordan two-sided centralizers on triangular rings without
assuming unity}

\author[A. Hosseini et al.]{Amin Hosseini$^*$ et al.}

\address{ Amin Hosseini, Department of Mathematics, Kashmar Higher Education Institute- Kashmar- Iran}
\email{\textcolor[rgb]{0.00,0.00,0.84}{hussi.kashm@gmail.com}}



\subjclass[2010]{Primary 16W25; Secondary 47B47, 15A78.}

\keywords{Jordan left (resp. right) centralizer, (Jordan) two-sided centralizer, (Jordan) two-sided generalized derivation, triangular ring.}

\date{Received: xxxxxx; Revised: yyyyyy; Accepted: zzzzzz.
\newline \indent $^{*}$ Corresponding author}

\begin{abstract}
The main purpose of this paper is to show that every Jordan centralizer and every Jordan two-sided centralizer is a centralizer on triangular rings without assuming unity. As an application, we prove that every Jordan generalized derivation on a triangular ring is a two-sided generalized derivation. Some other related results are also discussed.
\end{abstract} \maketitle

\section{Introduction and preliminaries}
Let $\mathcal{R}$ be an associative ring. An additive mapping $T:\mathcal{R} \rightarrow \mathcal{R}$ is said to be a \emph{left} (resp. \emph{right}) \emph{centralizer} or \emph{multiplier} of $\mathcal{R}$ if $T(xy) = T(x)y$ (resp. $T(xy) = x T(y)$) for all $x, y \in \mathcal{R}$. We call $T$ a \emph{two-sided centralizer} (some authors call it a centralizer) if it is both left and right centralizer. An additive mapping $T$ on $\mathcal{R}$ is called a \emph{Jordan left} (resp. \emph{right}) centralizer if $T(x^2) = T(x) x$ (resp. $T(x^2) = x T(x)$) for all $x \in \mathcal{R}$, and it is called a \emph{Jordan two-sided centralizer} if $T(x^2) = T(x)x = x T(x)$ for all $x \in \mathcal{R}$. An additive mapping $T$ on $\mathcal{R}$ is called a \emph{Jordan centralizer} if $T(x \circ y) = T(x) \circ y = x \circ T(y)$ for all $x, y \in \mathcal{R}$, where $x \circ y = xy + yx$. Since the product $\circ$ is commutative, there is no difference between the left and right Jordan centralizers. Obviously, every left (resp. right) centralizer is a Jordan left (resp. right) centralizer. But the converse is, in general, not true. For instance, let $\mathcal{S}$ be a ring such that the square of each element of $\mathcal{S}$ is zero, but the product of some nonzero elements of $\mathcal{S}$ is nonzero. Let
\begin{align*}
\mathfrak{S} = \Bigg\{\left [\begin{array}{ccc}
0 & a & b\\
0 & 0 & a\\
0 & 0 & 0
\end{array}\right ] \ : \ a, b \in \mathcal{S}\Bigg\}
\end{align*}
Clearly, $\mathfrak{S}$ is a ring. Define the mapping $T:\mathfrak{S} \rightarrow \mathfrak{S}$ by $$T\Bigg(\left [\begin{array}{ccc}
0 & a & b\\
0 & 0 & a\\
0 & 0 & 0
\end{array}\right ]\Bigg) = \left [\begin{array}{ccc}
0 & a & 0\\
0 & 0 & a\\
0 & 0 & 0
\end{array}\right ].$$
A straightforward verification shows that $T$ is both a Jordan left- and a right centralizer on $\mathfrak{S}$, but it is neither a left centralizer nor a right centralizer.

It is well known that centralizers are very important both in theory and applications, and so extensive studies have been conducted on them so far. Centralizers have been studied in the general framework of prime rings and semiprime rings. Recall that a ring $\mathcal{R}$ is called prime if for $x, y \in \mathcal{R}$, $x \mathcal{R} y = \{0\}$ implies that $x = 0$ or $y = 0$, and is semiprime if for $x \in \mathcal{R}$, $x \mathcal{R} x = \{0\}$ implies that $x = 0$. A ring $\mathcal{R}$ is said to be $n$-torsion free, where $n > 1$ is an integer, if for $x \in \mathcal{R}$, $nx = 0$ implies that $x = 0$. Similarly, a module $\mathcal{M}$ is said to be $n$-torsion free, where $n > 1$ is an integer, if for $\mathfrak{m} \in \mathcal{M}$, $n\mathfrak{m} = 0$ implies that $\mathfrak{m} = 0$.

In general, the question under what conditions a map becomes a left (resp. right) centralizer or a two-sided centralizer attracted much attention of mathematicians. Zalar \cite{Z} proved that if $\mathcal{R}$ is a 2-torsion free semiprime ring and $T$ is a Jordan left (resp. right) centralizer on $\mathcal{R}$, then $T$ is a left (resp. right) centralizer. In the same article he proved that every Jordan centralizer of a 2-torsion free semiprime ring $\mathcal{R}$ is a centralizer. Vukman \cite{V7} showed that an additive mapping $T:\mathcal{R} \rightarrow \mathcal{R}$, where $\mathcal{R}$ is a 2-torsion free semiprime ring, satisfying $2 T(x^2) = T(x)x + x T(x)$ for all $x \in \mathcal{R}$, is a centralizer. 
To read more about centralizers, we refer the readers to some recent papers \cite{A, C, F1, G, K1, K2, L, V, V5, V6, Z}, where further references can be found.
So far, many mathematicians have investigated Jordan centralizers on triangular rings (or algebras) with unity, see, e.g. \cite{C1, G1, H, L1, L2, F2}, and the references therein. In the present paper, we shall initiate the study of Jordan centralizers, Jordan two-sided centralizers and some related mappings on triangular rings without assuming unity.  \\

Triangular rings play a fundamental role in this paper. Let $\mathcal{A}$ and $\mathcal{B}$ be two associative rings and let $\mathcal{M}$ be an $(\mathcal{A}, \mathcal{B})$-bimodule which is faithful as a left $\mathcal{A}$-module as well as a right $\mathcal{B}$-module. The ring
\begin{align*}
\mathfrak{T} = Tri(\mathcal{A}, \mathcal{M}, \mathcal{B}) := \left \{\left [\begin{array}{cc}
a & m\\
0 & b
\end{array}\right ] \ : \ a \in \mathcal{A}, \ b \in \mathcal{B}, \ m \in \mathcal{M} \right\}
\end{align*}
under the usual matrix operations is said to be a \emph{triangular ring}. The important examples of triangular rings are upper triangular matrices over a ring $\mathcal{A}$, block upper triangular matrix algebras and nest algebras. It is clear that $\mathfrak{T} = Tri(\mathcal{A}, \mathcal{M}, \mathcal{B})$ is unital if and only if both $\mathcal{A}$ and $\mathcal{B}$ are unital. Note that there exist many triangular rings without unity. For instance, if $\mathcal{A}$ is a ring without unity, then the set of all upper triangular matrices over $\mathcal{A}$ is a ring without unity.

We set
\begin{align*}
& \mathfrak{T}_{11} =  \left \{\left [\begin{array}{cc}
a & 0\\
0 & 0
\end{array}\right ] \ : \ a \in \mathcal{A} \right\}, \\ & \mathfrak{T}_{12} =  \left \{\left [\begin{array}{cc}
0 & m\\
0 & 0
\end{array}\right ] \ : \ m \in \mathcal{M} \right\}, \\ & \mathfrak{T}_{22} =  \left \{\left [\begin{array}{cc}
0 & 0\\
0 & b
\end{array}\right ] \ : \ b \in \mathcal{B} \right\}.
\end{align*}
Then we may write $\mathfrak{T} = \mathfrak{T}_{11} \bigoplus \mathfrak{T}_{12} \bigoplus \mathfrak{T}_{22}$ and every element $X \in \mathfrak{T}$ can be written as $X = X_{11} + X_{12} + X_{22}$, where $X_{ij} \in \mathfrak{T}_{12}$, $i, j \in \{1, 2\}$.

Following \cite{F} and \cite[Lemma 1.5]{J}, we say that a ring $\mathcal{R}$ satisfies \emph{Condition (P)} whenever $x a x = 0$, for all $x \in \mathcal{R}$, implies that $a = 0$. Note that according to \cite[Example 3]{F}, there exists a non-semiprime ring which satisfies \emph{Condition (P)}.

In the following, we will explain the achievements of this article.
Let $\mathcal{A}$ and $\mathcal{B}$ be 2-torsion free rings such that each of them is either semiprime or satisfies \emph{Condition (P)} and let $\mathcal{M}$ be a 2-torsion free faithful $(\mathcal{A}, \mathcal{B})$-bimodule.
Then every Jordan two-sided centralizer on the triangular ring $\mathfrak{T} = Tri(\mathcal{A}, \mathcal{M}, \mathcal{B})$ is a centralizer.
Using several auxiliary results as well as the previous conclusion, we show that every Jordan centralizer on the triangular ring $\mathfrak{T}$ is a centralizer under certain conditions. In view of this theorem, we obtain some previously published results in this regard without assuming unity. For example, we obtain \cite[Corollary 3.2]{G1}. In addition, we prove that every Jordan two-sided generalized derivation and every Jordan generalized derivation on $\mathfrak{T}$ is a two-sided generalized derivation. For more
material about two-sided generalized derivations and other related results, see, e.g. \cite{Hos}.
In this paper, we do not need to assume the rings $\mathcal{A}$ and $\mathcal{B}$ and the module $\mathcal{M}$ be unital, while the above-mentioned papers studying Jordan centralizers on triangular rings (or algebras) assume that $\mathcal{A}$, $\mathcal{B}$ and $\mathcal{M}$ are unital and their proofs depend on the existence of the elements $P = \left [\begin{array}{cc}
\textbf{1}_\mathcal{A} & 0\\
0 & 0
\end{array}\right ]$ and $Q = \left [\begin{array}{cc}
0 & 0\\
0 & \textbf{1}_\mathcal{B}
\end{array}\right ]$, where $\textbf{1}_\mathcal{A}$ and $\textbf{1}_\mathcal{B}$ denote the identity elements of $\mathcal{A}$ and $\mathcal{B}$, respectively. But we prove our results without considering these conditions.

\section{Results and proofs}
We begin with the following useful lemma which we will use frequently to prove Theorems \ref{1}. Recall that a ring $\mathcal{R}$ satisfies \emph{Condition (P)} whenever $x a x = 0$, for all $x \in \mathcal{R}$, implies that $a = 0$.

\begin{lemma}\label{*}\cite[Proposition 1.4]{Z} Let $\mathcal{R}$ be a 2-torsion free ring and let $T:\mathcal{R} \rightarrow \mathcal{R}$ be a Jordan left (resp. right) centralizer. Then for any $x, y, z \in \mathcal{R}$, the following hold:\\
(i) $T(xy + yx) = T(x)y + T(y)x$ $(resp. \ T(xy + yx) = xT(y) + yT(x))$;\\
(ii) $T(xyx) = T(x)yx$ $(resp. \ T(xyx) = xy T(x))$.\\
\end{lemma}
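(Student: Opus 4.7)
The plan is to treat the left-centralizer case; the right case is completely symmetric. Throughout, the only hypotheses I will use are the Jordan identity $T(x^2)=T(x)x$, additivity of $T$, and the fact that $2r=0$ forces $r=0$ in $\mathcal{R}$.

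For part (i), I would linearize the Jordan identity. Replacing $x$ by $x+y$ in $T(x^2)=T(x)x$ and expanding, both $T$ and the multiplication on the right-hand side distribute over the sum, so
\begin{equation*}
T(x^2)+T(xy+yx)+T(y^2)=T(x)x+T(x)y+T(y)x+T(y)y.
\end{equation*}
The terms $T(x^2)=T(x)x$ and $T(y^2)=T(y)y$ cancel from both sides, leaving exactly (i). No 2-torsion assumption is needed for this step.

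For part (ii) I would build $xyx$ out of two applications of (i). Set $a=x$ and $b=xy+yx$ in the identity just proved; since $ab+ba=x(xy+yx)+(xy+yx)x=x^2y+2xyx+yx^2$, this gives
\begin{equation*}
T(x^2y+2xyx+yx^2)=T(x)(xy+yx)+T(xy+yx)\,x.
\end{equation*}
Now I would split the left-hand side using (i) with $a=x^2$, $b=y$, writing $T(x^2y+yx^2)=T(x^2)y+T(y)x^2=T(x)xy+T(y)x^2$. Similarly on the right, (i) gives $T(xy+yx)x=(T(x)y+T(y)x)x=T(x)yx+T(y)x^2$. Substituting these expressions, both $T(x)xy$ and $T(y)x^2$ cancel across the equation, and one is left with $2T(xyx)=2T(x)yx$. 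The 2-torsion-freeness of $\mathcal{R}$ then yields (ii).

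The only potential obstacle is the step of assembling $xyx$ via $x\cdot(xy+yx)+(xy+yx)\cdot x$; everything else is bookkeeping. For the \emph{right} Jordan centralizer case, the identical pattern works after swapping the order of every product: linearizing $T(x^2)=xT(x)$ gives $T(xy+yx)=xT(y)+yT(x)$, and then computing $T$ of $x(xy+yx)+(xy+yx)x$ by two applications of this identity and invoking 2-torsion-freeness yields $T(xyx)=xyT(x)$.
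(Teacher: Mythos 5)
Your argument is correct: the linearization of $T(x^2)=T(x)x$ gives (i) without any torsion hypothesis, and your substitution $b=xy+yx$ into (i), followed by cancelling $T(x)xy$ and $T(y)x^2$ and dividing by $2$, gives (ii) exactly. The paper does not prove this lemma itself but cites it as Proposition~1.4 of Zalar's paper, and your proof is precisely the standard argument given there, so there is nothing to compare beyond noting that you have supplied the omitted details faithfully.
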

The following result has been motivated by the Main Theorem of \cite{F}. Throughout this article, $\mathcal{M}$ denotes an $(\mathcal{A}, \mathcal{B})$-bimodule such that $\{m \in \mathcal{M} \ | \ \mathcal{A} m =\{0\}\} = \{0\}$ and $\{m \in \mathcal{M} \ | \ m \mathcal{B} =\{0\}\} = \{0\}$. 

\begin{theorem} \label{1}Let $\mathcal{A}$ and $\mathcal{B}$ be 2-torsion free rings such that each of them is either semiprime or satisfies Condition (P) and let $\mathcal{M}$ be a 2-torsion free faithful $(\mathcal{A}, \mathcal{B})$-bimodule.
If $\Psi$ is a Jordan two-sided centralizer on the triangular ring $\mathfrak{T} = Tri(\mathcal{A}, \mathcal{M}, \mathcal{B})$, 
then $\Psi$ is a centralizer on $\mathfrak{T}$.
\end{theorem}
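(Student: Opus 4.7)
My plan is to exploit the Peirce decomposition $\mathfrak{T} = \mathfrak{T}_{11} \oplus \mathfrak{T}_{12} \oplus \mathfrak{T}_{22}$ in three stages: first show that $\Psi$ respects this decomposition, then identify its diagonal restrictions with ordinary centralizers on $\mathcal{A}$ and $\mathcal{B}$, and finally reassemble block by block. Throughout, the workhorse is the pair of additive identities $\Psi(XY + YX) = \Psi(X)Y + \Psi(Y)X = X\Psi(Y) + Y\Psi(X)$ supplied by Lemma \ref{*}(i) (applied once via the Jordan left and once via the Jordan right structure of $\Psi$), together with the collapsing products $\mathfrak{T}_{ij}\mathfrak{T}_{kl} = 0$ unless $j = k$, and the nilpotency $\mathfrak{T}_{12}^2 = 0$.

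For the first stage, plugging $A \in \mathfrak{T}_{11}$ and $B \in \mathfrak{T}_{22}$ into those identities (with $AB = BA = 0$) gives $\Psi(A)B + \Psi(B)A = 0 = A\Psi(B) + B\Psi(A)$. Matching Peirce components forces each block to vanish separately. The $\mathfrak{T}_{11}$-piece of $\Psi(B)$ and the $\mathfrak{T}_{22}$-piece of $\Psi(A)$ produce annihilator conditions of the form $y\mathcal{A} = \{0\}$ (resp.\ $y\mathcal{B} = \{0\}$); such a $y$ must vanish, since under Condition (P) one has $ryr = r(yr) = 0$ for all $r$, while under semiprimeness $y\mathcal{R}y = \{0\}$ does the job directly. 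The $\mathfrak{T}_{12}$-pieces of $\Psi(A)$ and $\Psi(B)$ are annihilated by $\mathcal{B}$ on the right and by $\mathcal{A}$ on the left, respectively, so they vanish by the standing bimodule conditions on $\mathcal{M}$. This yields $\Psi(\mathfrak{T}_{11}) \subseteq \mathfrak{T}_{11}$ and $\Psi(\mathfrak{T}_{22}) \subseteq \mathfrak{T}_{22}$.

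The heart of the proof, and what I expect to be the main obstacle, is showing $\Psi(\mathfrak{T}_{12}) \subseteq \mathfrak{T}_{12}$. Writing $\Psi(M) = \alpha(m) + \mu(m) + \beta(m)$ with $\alpha(m) \in \mathcal{A}$, $\mu(m) \in \mathcal{M}$, $\beta(m) \in \mathcal{B}$, the relation $\Psi(M^2) = 0$ first yields $\alpha(m)m = 0$ and $m\beta(m) = 0$. Specializing $\Psi(X^2) = \Psi(X)X = X\Psi(X)$ to $X = A + M$ (and symmetrically to $X = M + B$) then produces $\alpha(m) \in Z(\mathcal{A})$, $\beta(m) \in Z(\mathcal{B})$, $\alpha(am) = a\alpha(m)$, $\beta(mb) = \beta(m)b$, $\mu(am) = T_1(a)m$, $\mu(mb) = mT_2(b)$, and crucially $\alpha(mb) = 0$ and $\beta(am) = 0$, where $T_i := \Psi|_{\mathfrak{T}_{ii}}$. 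Taking $X = M + M'$ in addition supplies the biadditive identities $\alpha(m)m' + \alpha(m')m = 0$ and $m\beta(m') + m'\beta(m) = 0$. Substituting $m' \mapsto m'b$ in the first and invoking $\alpha(m'b) = 0$ collapses it to $(\alpha(m)m')b = 0$ for every $b \in \mathcal{B}$, so $\alpha(m)m' = 0$ by the hypothesis $m\mathcal{B} = \{0\} \Rightarrow m = 0$; varying $m'$ yields $\alpha(m)\mathcal{M} = \{0\}$, whence $\alpha(m) = 0$ by the left-faithfulness of the bimodule. A symmetric substitution $m' \mapsto am'$ using $\beta(am') = 0$ kills $\beta$.

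Once $\Psi$ respects the Peirce decomposition, $T_1$ and $T_2$ become Jordan two-sided centralizers on $\mathcal{A}$ and $\mathcal{B}$, and are therefore genuine centralizers by the Zalar--Vukman results in the semiprime case and by the Main Theorem of \cite{F} in the Condition (P) case. A direct block-by-block comparison for general $X = A + M + B$, $Y = A' + M' + B'$, in which all cross products $\mathfrak{T}_{ij}\mathfrak{T}_{kl}$ with $j \ne k$ vanish, reduces $\Psi(XY) = \Psi(X)Y = X\Psi(Y)$ to the centralizer identities $T_i(xy) = T_i(x)y = xT_i(y)$ and the intertwining relations $\mu(am) = T_1(a)m$, $\mu(mb) = mT_2(b)$ already established, completing the proof.
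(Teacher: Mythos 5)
Your proposal is correct in substance but follows a genuinely different route from the paper's. The paper never shows that $\Psi$ preserves the full Peirce decomposition: it only kills $[\Psi(\mathfrak{T}_{11})]_{22}$, $[\Psi(\mathfrak{T}_{11})]_{12}$, $[\Psi(\mathfrak{T}_{22})]_{11}$, $[\Psi(\mathfrak{T}_{22})]_{12}$ and $[\Psi(\mathfrak{T}_{12})]_{11}$, and then verifies $\Psi(A_{ij}B_{kl}) = \Psi(A_{ij})B_{kl}$ product by product, leaving $[\Psi(\mathfrak{T}_{12})]_{22}$ untouched because it is never needed for the left-centralizer identity. Your stage showing $\alpha(m)=\beta(m)=0$ (via the linearized identity $\alpha(m)m'+\alpha(m')m=0$, the substitution $m'\mapsto m'b$, and $\alpha(m'b)=0$) checks out and actually establishes more than the paper does; it is a clean alternative to the paper's Step 3, which instead uses $\Psi(A_{12}B_{11}A_{12})=\Psi(A_{12})B_{11}A_{12}=0$.

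The one step you should repair is the claim that $T_1,T_2$ are centralizers ``by the Main Theorem of \cite{F} in the Condition (P) case.'' That theorem concerns Jordan derivations of triangular rings; it says nothing about Jordan two-sided centralizers of a ring satisfying Condition (P), and Zalar's argument genuinely uses semiprimeness, so this case is not covered by anything you cite. Fortunately you do not need an external theorem: the intertwining relations you already derived give $T_1(a_1a_2)m=\mu((a_1a_2)m)=T_1(a_1)(a_2m)=(T_1(a_1)a_2)m$ and $\mu(a_1(a_2m))=a_1\mu(a_2m)=a_1T_1(a_2)m$, so left-faithfulness of $\mathcal{M}$ yields $T_1(a_1a_2)=T_1(a_1)a_2=a_1T_1(a_2)$ directly, and symmetrically for $T_2$ using right-faithfulness. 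This is exactly the mechanism of the paper's Step 7 (there phrased with the coordinate maps $f_{ij},g_{ij},h_{ij}$ and test elements $C_{12}$): the faithful bimodule, not a structure theorem for $\mathcal{A}$ and $\mathcal{B}$, is what forces the diagonal blocks to be centralizers. With that substitution your block-by-block reassembly goes through.
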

\begin{proof} We first show that $\Psi$ is a left centralizer on $\mathfrak{T} = Tri(\mathcal{A}, \mathcal{M}, \mathcal{B})$.  We prove our claim with an 8-step proof. \\ 

\textbf{Step 1}. It is clear that $\Psi(0) = 0$. \\

\textbf{Step 2}. For every $A_{11} \in \mathfrak{T}_{11}$ and $B_{22} \in \mathfrak{T}_{22}$, we have:\\
(i) $[\Psi(A_{11})]_{22} = 0$ and $[\Psi(B_{22})]_{11} = 0$;\\
(ii) $\Psi(B_{22} A_{11}) = \Psi(B_{22})A_{11}$;\\
(iii) $\Psi( A_{11} B_{22}) = \Psi(A_{11})B_{22}$;\\
(iv) $[\Psi(A_{11})]_{12} = 0$ for all $A_{11} \in \mathfrak{T}_{11}$. \\

Proof. (i) Clearly, $A_{11} B_{22} + B_{22} A_{11} = 0$. Using Lemma \ref{*}, we have
\begin{align*}
0 & = \Psi(A_{11} B_{22} + B_{22} A_{11}) = \Psi(A_{11}) B_{22} + \Psi(B_{22}) A_{11}
\end{align*}
Since $\Psi(0) = 0$, $[\Psi(0)]_{11} =  [\Psi(0)]_{12} = [\Psi(0)]_{22} = 0$. It is clear that $[\Psi(B_{22})A_{11}]_{22} = 0$. Therefore, we have
\begin{align*}
0 & = [\Psi(0)]_{22} = [\Psi(A_{11} B_{22} + B_{22} A_{11})]_{22} \\ & = [\Psi(A_{11}) B_{22} + \Psi(B_{22}) A_{11}]_{22} \\ & = [\Psi(A_{11})]_{22}B_{22},
\end{align*}
which means that $[\Psi(A_{11})]_{22}B_{22} = 0$ and since $\mathcal{B}$ is a semiprime ring or satisfies \emph{Condition (P)}, we infer that $[\Psi(A_{11})]_{22}= 0$ for all $A_{11} \in \mathfrak{T}_{11}$. It is easy to see that $ [\Psi(A_{11}) B_{22}]_{11} = 0$. So, we have
\begin{align*}
0 = [\Psi(0)]_{11} & = [\Psi(A_{11} B_{22} + B_{22} A_{11}]_{11} \\ & = [\Psi(A_{11}) B_{22} + \Psi(B_{22})A_{11}]_{11} \\ & = [\Psi(B_{22})]_{11}A_{11},
\end{align*}
which means that $[\Psi(B_{22})]_{11}A_{11} = 0$ and since $\mathcal{A}$ is a semiprime ring or satisfies \emph{Condition (P)}, we get that $[\Psi(B_{22})]_{11} = 0$ for all $B_{22} \in \mathfrak{T}_{22}$.\\

(ii) First, note that $B_{22}A_{11} = 0$ and so, $\Psi(B_{22} A_{11}) = \Psi(0) = 0$ for all $B_{22} \in \mathfrak{T}_{22}$ and $A_{11} \in \mathfrak{T}_{11}$. Also, we can easily see that
$$\Psi(B_{22})A_{11} = [\Psi(B_{22})]_{11}A_{11} = 0.$$
Hence, we have
\begin{align*}
\Psi(B_{22} A_{11}) = \Psi(B_{22})A_{11}
\end{align*}
for all $B_{22} \in \mathfrak{T}_{22}$ and $A_{11} \in \mathfrak{T}_{11}$.\\

(iii) For any $B_{22} \in \mathfrak{T}_{22}$ and $A_{11} \in \mathfrak{T}_{11}$, we have the following expressions:
\begin{align*}
\Psi(A_{11} B_{22}) = 0 & = \Psi(A_{11} B_{22} + B_{22}A_{11}) \\ & = \Psi(A_{11})B_{22} + \Psi(B_{22})A_{11} \\ & =  \Psi(A_{11})B_{22}.
\end{align*}\\

(iv) Using parts (i) and (iii), we have
\begin{align*}
0 = \Psi(A_{11}B_{22}) = \Psi(A_{11})B_{22} = [\Psi(A_{11})]_{12} B_{22},
\end{align*}
for all $A_{11} \in \mathfrak{T}_{11}$ and $B_{22} \in \mathfrak{T}_{22}$, which means that $[\Psi(A_{11})]_{12} b = 0$ for all $b \in \mathcal{B}$. According to the conditions we assumed for $\mathcal{M}$, we obtain that $[\Psi(A_{11})]_{12} = 0$ for all $A_{11} \in \mathfrak{T}_{11}$. \\

\textbf{Step 3}. $[\Psi(A_{12})]_{11} = 0$ for all $A_{12} \in \mathfrak{T}_{12}$.\\

Proof.  It follows from Lemma \ref{*} (ii) that
$$0 = \Psi(A_{12} B_{11} A_{12}) = \Psi(A_{12})B_{11}A_{12}$$ for all $B_{11} \in \mathfrak{T}_{11}$ and $A_{12} \in \mathfrak{T}_{12}$, and consequently, $[\Psi(A_{12})]_{11} a m = 0$ for all $a \in \mathcal{A}$, $m \in \mathcal{M}$. From this equality we deduce that $[\Psi(A_{12})]_{11} a = 0$ for all $a \in \mathcal{A}$ and since $\mathcal{A}$ is a semiprime ring or satisfies \emph{Condition (P)}, we get that $[\Psi(A_{12})]_{11} = 0$, as desired.\\

\textbf{Step 4}. For any $A_{11} \in \mathfrak{T}_{11}$ and $B_{12} \in \mathfrak{T}_{12}$, the following is true:\\
(i) $\Psi(B_{12} A_{11}) = \Psi(B_{12}) A_{11}$;\\
(ii) $\Psi(A_{11} B_{12}) = \Psi(A_{11})B_{12}$.\\

Proof. (i) By \textbf{Step 3}, we have
\begin{align*}
\Psi(B_{12})A_{11} & = [\Psi(A_{12}]_{11}A_{11} \\ & = 0 \\ & = \Psi(0) \\ & = \Psi(B_{12} A_{11}).
\end{align*}

(ii) Using part (i), we have the following expressions:\\
\begin{align*}
\Psi(A_{11} B_{12}) & = \Psi(A_{11} B_{12} + B_{12} A_{11}) \\ & = \Psi(A_{11})B_{12} + \Psi(B_{12})A_{11} \\ & = \Psi(A_{11})B_{12}.
\end{align*}

\textbf{Step 5.} For any $A_{12} \in \mathfrak{T}_{12}$ and $B_{22} \in \mathfrak{T}_{22}$, we have:\\
(i) $\Psi( A_{12}B_{22}) = \Psi(A_{12}) B_{22}$;\\
(ii) $\Psi(B_{22} A_{12}) = \Psi(B_{22})A_{12}$.\\

Proof. (i) Using \textbf{Step 2}, we have
\begin{align*}
\Psi( A_{12}B_{22}) & = \Psi( A_{12}B_{22} + B_{22} A_{12}) \\ & = \Psi(A_{12}) B_{22} + \Psi(B_{22})A_{12} \\ & = \Psi(A_{12}) B_{22} + [\Psi(B_{22})]_{11}A_{12} \\ & = \Psi(A_{12}) B_{22}.
\end{align*}

(ii) \begin{align*}
\Psi(B_{22}) A_{12} & = [\Psi(B_{22})]_{11}A_{12} \\ & = 0 \\ & = \Psi(B_{22} A_{12}).
\end{align*}

\textbf{Step 6}. $\Psi$ is a left centralizer on $\mathfrak{T}_{12}$.\\

Proof. Let $A_{12}$ and $B_{12}$ be arbitrary elements of $\mathfrak{T}_{12}$. By \textbf{Step 3}, we have
\begin{align*}
\Psi(A_{12})B_{12} & = [\Psi(A_{12})]_{11}B_{12} \\ & = 0 \\ & = \Psi(A_{12} B_{12}).
\end{align*}

\textbf{Step 7}. $\Psi$ is a left centralizer on $\mathfrak{T}_{11}$ and $\mathfrak{T}_{22}$. \\

Proof. First, according to the proofs of \cite[Theorem 2.10]{J1} and \cite[Theorem 2.1]{L1}, for any $A = \left[\begin{array}{cc}
a& m\\
0 & b
\end{array}\right ] \in \mathfrak{T}$, we can write
\begin{align*}
\Psi\left(\left [\begin{array}{cc}
a & m\\
0 & b
\end{array}\right ]\right) = \left[\begin{array}{cc}
f_{11}(a) + g_{11}(m) + h_{11}(b)& f_{12}(a) + g_{12}(m) + h_{12}(b)\\
0 & f_{22}(a) + g_{22}(m) + h_{22}(b)
\end{array}\right ],
\end{align*}
where $f_{11} : \mathcal{A} \rightarrow \mathcal{A}$, $f_{12} : \mathcal{A} \rightarrow \mathcal{M}$, $f_{22} : \mathcal{A} \rightarrow \mathcal{B}$, $g_{11} : \mathcal{M} \rightarrow \mathcal{A}$, $g_{12} : \mathcal{M} \rightarrow \mathcal{M}$,
$g_{22} :\mathcal{M} \rightarrow \mathcal{B}$, $h_{11} : \mathcal{B} \rightarrow \mathcal{A}$, $h_{12} : \mathcal{B} \rightarrow \mathcal{M}$, and $h_{22} : \mathcal{B} \rightarrow \mathcal{B}$ are maps.
Using \textbf{Step 4}, we have
\begin{align*}
\Psi(A_{11} B_{11} C_{12}) = \Psi(A_{11}) B_{11} C_{12}.
\end{align*}
On the other hand, we have
\begin{align*}
\Psi(A_{11} B_{11} C_{12}) = \Psi(A_{11} B_{11})C_{12}.
\end{align*}
Comparing the above expressions, we get that
\begin{align}
\Psi(A_{11} B_{11})C_{12} = \Psi(A_{11}) B_{11} C_{12}.
\end{align}
Using equation (2.1) and the parts (i) and (iv) of \textbf{Step} 2, we have
\begin{align*}
0 & = (\Psi(A_{11} B_{11}) - \Psi(A_{11}) B_{11}) C_{12} \\ & = \left[\begin{array}{cc}
f_{11}(a_1 a_2) + g_{11}(0) + h_{11}(0) & 0\\
0 & 0
\end{array}\right ]\left[\begin{array}{cc}
0& m\\
0 & 0
\end{array}\right ] - \\ & \left[\begin{array}{cc}
f_{11}(a_1) + g_{11}(0) + h_{11}(0)& 0\\
0 & 0
\end{array}\right ] \left[\begin{array}{cc}
a_2& 0\\
0 & 0
\end{array}\right ] \left[\begin{array}{cc}
0& m\\
0 & 0
\end{array}\right ]\\ & = \left[\begin{array}{cc}
0 & (f_{11}(a_1 a_2) + g_{11}(0) + h_{11}(0) - (f_{11}(a_1) + g_{11}(0) + h_{11}(0))a_2)m\\
0 & 0
\end{array}\right ].
\end{align*}
Now, using the assumption that $\mathcal{M}$ is a faithful left $\mathcal{A}$-module, we obtain that
\begin{align*}
f_{11}(a_1 a_2) + g_{11}(0) + h_{11}(0) = (f_{11}(a_1) + g_{11}(0) + h_{11}(0))a_2
\end{align*}
for all $a_1, a_2 \in \mathcal{A}$. Hence, we obtain that
\begin{align*}
\Psi(A_{11} B_{11}) = \Psi(A_{11})B_{11}.
\end{align*}
for all $A_{11}, B_{11} \in X_{11}$. To complete the proof of this step, we must show that $\Psi(A_{22} B_{22}) = \Psi(A_{22})B_{22}$ for all $A_{22}, B_{22} \in X_{22}$. Since $\Psi$ is a Jordan two-sided centralizer on $\mathfrak{T}$, it is observed that $\Psi(X^2) = X \Psi(X)$ for all $ X \in \mathfrak{T}$. Similar to the previous steps, one can easily obtain that $\Psi(C_{12}B_{22}) = C_{12}\Psi(B_{22})$ and $\Psi(A_{11}B_{22}) = A_{11} \Psi(B_{22})$ for all $A_{11} \in \mathfrak{T}_{11}$, $C_{12} \in \mathfrak{T}_{12}$ and $B_{22} \in \mathfrak{T}_{22}$. Using these facts and \textbf{Step 2}(i), we have
\begin{align*}
0 & = \Psi(A_{11}B_{22}) = A_{11}\Psi(B_{22}) \\ & =  \left[\begin{array}{cc}
a & 0\\
0 & 0
\end{array}\right ] \left[\begin{array}{cc}
0& f_{12}(0) + g_{12}(0) + h_{12}(b)\\
0 & f_{22}(0) + g_{22}(0) + h_{22}(b)
\end{array}\right ] \\ & =  \left[\begin{array}{cc}
0 & a(f_{12}(0) + g_{12}(0) + h_{12}(b))\\
0 & 0
\end{array}\right ],
\end{align*}
which means that $a [\Psi(B_{22})]_{12} = 0$ for all $a \in \mathcal{A}$ and according to the assumptions we assume for $\mathcal{M}$, it is obtained that $[\Psi(B_{22})]_{12} = 0$ for all $B_{22} \in \mathfrak{T}_{22}$.
For any $A_{22}, B_{22} \in \mathfrak{T}_{22}$ and $C_{12} \in \mathfrak{T}_{12}$, we have
$\Psi(C_{12}A_{22}B_{22}) = C_{12}\Psi(A_{22}B_{22})$ as well as using \textbf{Step 5}(i), we have $\Psi(C_{12}A_{22}B_{22}) = \Psi(C_{12}A_{22})B_{22} = C_{12}\Psi(A_{22})B_{22}$.
Comparing the previous equations, we deduce that $C_{12}(\Psi(A_{22}B_{22}) - \Psi(A_{22})B_{22}) = 0.$ Using \textbf{Step 2}(i) and the fact that $[\Psi(B_{22})]_{12} = 0$ for all $B_{22}\in X_{22}$, we have
\begin{align*}
0 & = C_{12}(\Psi(A_{22}B_{22}) - \Psi(A_{22})B_{22}) \\ & = \left[\begin{array}{cc}
0& m\\
0 & 0
\end{array}\right ]\left[\begin{array}{cc}
0& 0\\
0 & f_{22}(0) + g_{22}(0) + h_{22}(b_1 b_2)
\end{array}\right ] - \\ & \left[\begin{array}{cc}
0& m\\
0 & 0
\end{array}\right ]\left[\begin{array}{cc}
0& 0\\
0 & f_{22}(0) + g_{22}(0) + h_{22}(b_1)
\end{array}\right ] \left[\begin{array}{cc}
0& 0\\
0 & b_2
\end{array}\right ] \\ & = \left[\begin{array}{cc}
0& m\\
0 & 0
\end{array}\right ]\left[\begin{array}{cc}
0& 0\\
0 & f_{22}(0) + g_{22}(0) + h_{22}(b_1 b_2) - (f_{22}(0) + g_{22}(0) + h_{22}(b_1))b_2
\end{array}\right ] \\ & = \left[\begin{array}{cc}
0& m(f_{22}(0) + g_{22}(0) + h_{22}(b_1 b_2) - (f_{22}(0) + g_{22}(0) + h_{22}(b_1))b_2)\\
0 & 0
\end{array}\right ]
\end{align*}
Using the fact that $\mathcal{M}$ is also a faithful right $\mathcal{B}$-module, one can deduce
\begin{align*}
f_{22}(0) + g_{22}(0) + h_{22}(b_1 b_2) = (f_{22}(0) + g_{22}(0) + h_{22}(b_1))b_2
\end{align*}
for all $b_1 , b_2 \in \mathcal{B}$. Previous equation with the fact that $[\Psi(B_{22})]_{12} = 0$ imply that
\begin{align*}
\Psi(A_{22} B_{22}) = \Psi(A_{22})B_{22}
\end{align*}
for all $A_{22}, B_{22} \in X_{22}$. \\

\textbf{Step 8}. $\Psi$ is a left centralizer on $\mathfrak{T}$. Applying the above steps, we have the following expressions for any $A = A_{11} + A_{12} + A_{22}$ and $B = B_{11} + B_{12} + B_{22}$ in $\mathfrak{T}$,
\begin{align*}
\Psi(AB) & = \Psi\left(\sum_{1 = i \leq j = 2, \ \  1 = k \leq l = 2}A_{ij}B_{kl}\right) \\  & = \sum_{1 = i \leq j = 2, \ \  1 = k \leq l = 2}\Psi\left(A_{ij}B_{kl}\right) \\ & = \sum_{1 = i \leq j = 2, \ \  1 = k \leq l = 2}\Psi(A_{ij})B_{kl} \\ & = \Psi\left(\sum_{1 = i \leq j = 2}A_{ij}\right)B \\ & = \Psi(A)B.
\end{align*}
Now, we are going to prove that $\Psi$ is a right centralizer on $\mathfrak{T}$. We know that $\Psi$ is a Jordan left centralizer. Hence, according to \textbf{Step 2}(iii) and \textbf{Step 4 }(ii), respectively, we have $\Psi(A_{11} B_{22}) = \Psi(A_{11}) B_{22}$ and $\Psi(A_{11}B_{12}) = \Psi(A_{11}) B_{12}$ for all $A_{11} \in \mathfrak{T}_{11}$, $B_{22} \in \mathfrak{T}_{22}$ and $B_{12} \in \mathfrak{T}_{12}$. Reasoning like above, we can show that $\Psi$ is a right centralizer and we leave it to the interested readers. Therefore, $\Psi$ is a centralizer on $\mathfrak{T}$.
\end{proof}

In the following example, we provide the rings $\mathcal{A}$ and $\mathcal{B}$ and also a module $\mathcal{M}$ that satisfy the conditions of Theorem \ref{1}.
\begin{example} Let $\mathbb{Z}$ be the set of all integers. Set
\begin{align*}
\mathcal{A} = \mathcal{B} = \left\{\left [\begin{array}{cc}
2n & 0\\
0 & 2n
\end{array}\right ] \ : \ n \in \mathbb{Z} \right\}.
\end{align*}
It is evident that $\mathcal{A}$ does not contain unity. Let
\begin{align*}
\mathcal{M} = \left\{\left [\begin{array}{cc}
i & j\\
0 & k
\end{array}\right ] \ : \ i, j, k \in \mathbb{Z} \right\}.
\end{align*}
A straightforward verification shows that $\mathcal{A}$ is a semiprime ring satisfying \emph{Condition (P)} and further $\mathcal{M}$ is a faithful $(\mathcal{A}, \mathcal{B})$-bimodule such that $\{m \in \mathcal{M} \ | \ \mathcal{A} m =\{0\}\} = \{0\} = \{m \in \mathcal{M} \ | \ m \mathcal{B} =\{0\}\}$. Therefore, the module $\mathcal{M}$ satisfies all the conditions of Theorem \ref{1}.
\end{example}

The previous theorem proves \cite[Corollary 3.2]{G1} for non-unital triangular rings.\\
\\
By getting idea from \cite{Hos}, an additive mapping $\Phi$ from a ring $\mathcal{R}$ into itself is called a Jordan two-sided generalized derivation associated with a Jordan derivation $\Delta$ on $\mathcal{R}$ if $\Phi(x^2) = \Phi(x) x + x \Delta(x) = x \Phi(x) + \Delta(x)x$ for all $x \in \mathcal{R}$.
\begin{corollary} \label{****} Let $\mathcal{A}$, $\mathcal{B}$, $\mathcal{M}$ and $\mathfrak{T}$ be as Theorem \ref{1} and let $\Phi:\mathfrak{T} \rightarrow \mathfrak{T}$ be a Jordan two-sided generalized derivation associated with a Jordan derivation $\Delta:\mathfrak{T} \rightarrow \mathfrak{T}$. Then $\Phi$ is a two-sided generalized derivation associated with the derivation $\Delta$.
\end{corollary}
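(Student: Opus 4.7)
The strategy is to decompose $\Phi$ into a centralizer part and a derivation part and then invoke Theorem \ref{1}. Set $T := \Phi - \Delta$. Since $\Delta$ is a Jordan derivation, $\Delta(X^2) = \Delta(X)X + X\Delta(X)$, so combining this with the two expressions $\Phi(X^2) = \Phi(X)X + X\Delta(X) = X\Phi(X) + \Delta(X)X$ and subtracting gives
\[T(X^2) = (\Phi(X) - \Delta(X))X = T(X)X \quad \text{and} \quad T(X^2) = X(\Phi(X) - \Delta(X)) = XT(X)\]
for every $X \in \mathfrak{T}$. Thus $T$ is a Jordan two-sided centralizer on $\mathfrak{T}$, and Theorem \ref{1} upgrades it at once to a (two-sided) centralizer, so $T(XY) = T(X)Y = XT(Y)$ for all $X, Y \in \mathfrak{T}$.

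The second ingredient I need is that $\Delta$ itself is a (two-sided) derivation on $\mathfrak{T}$, not merely a Jordan derivation. I would establish this by a Peirce-decomposition argument parallel to the eight-step proof of Theorem \ref{1}, using the standard consequences of the Jordan identity for $\Delta$, namely $\Delta(XY + YX) = \Delta(X)Y + X\Delta(Y) + \Delta(Y)X + Y\Delta(X)$ and $\Delta(XYX) = \Delta(X)YX + X\Delta(Y)X + XY\Delta(X)$. Together with the decomposition $\mathfrak{T} = \mathfrak{T}_{11} \oplus \mathfrak{T}_{12} \oplus \mathfrak{T}_{22}$, the semiprimeness or Condition (P) of $\mathcal{A}$ and $\mathcal{B}$, the two-sided faithfulness of $\mathcal{M}$, and $\mathfrak{T}_{12}^2 = 0$, these identities kill the off-corner Peirce components of $\Delta$ on each $\mathfrak{T}_{ij}$ and verify multiplicativity corner by corner exactly as in the proof of Theorem \ref{1}. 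In practice I would package this as a separate lemma, or cite the corresponding result in \cite{Hos}, rather than repeat the bookkeeping inside the corollary.

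Granting both facts, the conclusion is immediate. Writing $\Phi = T + \Delta$,
\[\Phi(XY) = T(XY) + \Delta(XY) = T(X)Y + \Delta(X)Y + X\Delta(Y) = \Phi(X)Y + X\Delta(Y),\]
and symmetrically $\Phi(XY) = XT(Y) + \Delta(X)Y + X\Delta(Y) = X\Phi(Y) + \Delta(X)Y$, which is exactly the definition of a two-sided generalized derivation associated with the derivation $\Delta$. The main obstacle is therefore not the reduction via $T = \Phi - \Delta$, which is a two-line manipulation once Theorem \ref{1} is available, but rather the auxiliary assertion that every Jordan derivation on the non-unital triangular ring $\mathfrak{T}$ is a derivation; absent a direct reference, this has to be proved by reusing the triangular-ring machinery developed for Theorem \ref{1}.
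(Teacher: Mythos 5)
Your proposal matches the paper's proof essentially step for step: the paper likewise sets $\Psi = \Phi - \Delta$, observes it is a Jordan two-sided centralizer, upgrades it via Theorem \ref{1}, and then adds back $\Delta$. The auxiliary fact you single out as the main obstacle --- that the Jordan derivation $\Delta$ on the non-unital triangular ring $\mathfrak{T}$ is a genuine derivation --- is not re-proved in the paper but quoted directly as the Main Theorem of \cite{F}, which is precisely the result your sketched Peirce-decomposition argument would re-derive (note that \cite{Hos} is not the right reference for it).
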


\begin{proof} It is clear that $\Psi = \Phi - \Delta$ is a Jordan left- and a right centralizer on $\mathfrak{T}$ and it follows from Theorem \ref{1} that it is a centralizer. Also, it follows from the Main Theorem of \cite{F} that $\Delta$ is a derivation. So, we have
\begin{align*}
\Phi(AB) & = \Delta(AB) + \Psi(AB) \\ & = \Delta(A)B + A \Delta(B) + \Psi(A)B \\ & = \Phi(A)B + A \Delta(B),
\end{align*}
and also
\begin{align*}
\Phi(AB) & = \Delta(AB) + \Psi(AB) \\ & = \Delta(A)B + A \Delta(B) + A \Psi(B) \\ & = A \Phi(B) + \Delta(A)B,
\end{align*}
for all $A, B \in \mathfrak{T}$. It means that $\Phi$ is a two-sided generalized derivation on $\mathfrak{T}$.
\end{proof}

\begin{corollary} Let $\mathcal{T}_{n}$ be the $n \times n$ upper triangular matrix algebra over $\mathbb{C}$. Then every Jordan two-sided generalized derivation associated with a Jordan derivation $\Delta$ on $\mathcal{T}_n$ is a two-sided generalized derivation associated with the derivation $\Delta$.
\end{corollary}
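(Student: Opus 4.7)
The plan is to deduce this from Corollary~\ref{****} applied to a natural decomposition of $\mathcal{T}_n$ as a triangular ring. Concretely, I would write
\[
\mathcal{T}_n \;=\; Tri\bigl(\mathbb{C},\; \mathbb{C}^{1\times(n-1)},\; \mathcal{T}_{n-1}\bigr),
\]
taking $\mathcal{A}=\mathbb{C}$ to be the $(1,1)$-entry, $\mathcal{B}=\mathcal{T}_{n-1}$ to be the lower-right $(n-1)\times(n-1)$ block, and $\mathcal{M}=\mathbb{C}^{1\times(n-1)}$ to be the row vectors in the remaining entries of the first row, with the obvious matrix actions.

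The bulk of the work is routine verification of the hypotheses of Theorem~\ref{1}. Both $\mathcal{A}$ and $\mathcal{B}$ are $2$-torsion free since they are $\mathbb{C}$-algebras, and $\mathcal{A}=\mathbb{C}$ is a field and hence semiprime. The module $\mathcal{M}$ is $2$-torsion free; and since $\mathcal{A}$ contains $1$ and $\mathcal{B}$ contains the identity matrix of $\mathcal{T}_{n-1}$, $\mathcal{M}$ is faithful both as a left $\mathcal{A}$-module and as a right $\mathcal{B}$-module, and moreover $\{m\in\mathcal{M}:\mathcal{A}m=\{0\}\}=\{0\}=\{m\in\mathcal{M}:m\mathcal{B}=\{0\}\}$.

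The only substantive point is to show that $\mathcal{B}=\mathcal{T}_{n-1}$ satisfies Condition~(P). Note that for $n\ge 3$ this ring is \emph{not} semiprime, since the strictly upper triangular matrices form a nonzero nilpotent ideal, so semiprimeness of $\mathcal{B}$ is unavailable. I would verify Condition~(P) directly by testing with matrix units: suppose $a\in\mathcal{T}_{n-1}$ satisfies $xax=0$ for every $x\in\mathcal{T}_{n-1}$. Taking $x=e_{kk}$ yields $a_{kk}e_{kk}=0$ for each $k$, so every diagonal entry of $a$ vanishes; then taking $x=e_{kk}+e_{ll}$ with $k<l$ yields $a_{kl}e_{kl}=0$, so every strictly upper entry vanishes as well, whence $a=0$.

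With all hypotheses of Theorem~\ref{1} in place, Corollary~\ref{****} gives the conclusion at once: every Jordan two-sided generalized derivation on $\mathcal{T}_n$ associated with a Jordan derivation $\Delta$ is a two-sided generalized derivation associated with $\Delta$. The degenerate case $n=1$, where $\mathcal{T}_1=\mathbb{C}$ is commutative, is handled separately by a direct polarization argument (or by appealing to Vukman's theorem cited in the introduction, since $\mathbb{C}$ is a $2$-torsion free semiprime ring). The main obstacle is the Condition~(P) verification for $\mathcal{T}_{n-1}$; the rest is unpacking definitions.
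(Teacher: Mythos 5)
Your proposal is correct and follows the route the paper intends: the paper's own proof is just the word ``Straightforward,'' and what it leaves implicit is exactly your argument --- realize $\mathcal{T}_n$ as $Tri(\mathbb{C}, \mathbb{C}^{1\times(n-1)}, \mathcal{T}_{n-1})$ (or with the blocks the other way around), check the hypotheses of Theorem~\ref{1}, and invoke Corollary~\ref{****}. Your verification that $\mathcal{T}_{n-1}$ satisfies \emph{Condition (P)} via the matrix units $e_{kk}$ and $e_{kk}+e_{ll}$ is the one genuinely non-trivial point, and it is correct; this is a useful detail the paper omits.
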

\begin{proof}
Straightforward.
\end{proof}

In the following, we are going to prove that every Jordan centralizer on a triangular ring is a centralizer under certain conditions. Let $\mathcal{R}$ be a ring. An additive mapping $T:\mathcal{R} \rightarrow \mathcal{R}$ is called a Jordan centralizer if $$T(x \circ y) = T(x) \circ y = x \circ T(y)$$ for all $x, y \in \mathcal{R}$, where $x \circ y = xy + yx$. To establish our theorem concerning Jordan centralizers, the following auxiliary results are needed.

\begin{lemma}\label{*1} Let $\mathcal{A}$ and $\mathcal{B}$ be semiprime rings and let $\mathcal{M}$ be a faithful $(\mathcal{A}, \mathcal{B})$-bimodule such that $\{m \in \mathcal{M} \ | \ \mathcal{A} m =\{0\}\} = \{0\}$ or $\{m \in \mathcal{M} \ | \ m \mathcal{B} =\{0\}\} = \{0\}$. If $A_0 X - X A_0 \in Z(\mathfrak{T})$ for all $X \in \mathfrak{T}$, then $A_0 \in Z(\mathfrak{T})$.
\end{lemma}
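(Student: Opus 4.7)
The plan is to write $A_0 = \begin{pmatrix} a_0 & m_0 \\ 0 & b_0 \end{pmatrix}$ with $a_0 \in \mathcal{A}$, $m_0 \in \mathcal{M}$, $b_0 \in \mathcal{B}$, and to exploit the hypothesis $[A_0, X] \in Z(\mathfrak{T})$ by feeding in the three elementary slice-types $X_{11}, X_{12}, X_{22}$ in turn. The crucial preparatory observation is an explicit description of $Z(\mathfrak{T})$: a direct expansion of $ZX - XZ$ for a general $Z = \begin{pmatrix} z_1 & z_m \\ 0 & z_2 \end{pmatrix}$ shows that $Z \in Z(\mathfrak{T})$ if and only if $z_1 \in Z(\mathcal{A})$, $z_2 \in Z(\mathcal{B})$, $z_m = 0$ (this is precisely where the ``or''-hypothesis on $\mathcal{M}$ enters, via $\mathcal{A} z_m = \{0\}$ or $z_m \mathcal{B} = \{0\}$), and the bimodule compatibility $z_1 m = m z_2$ holds for every $m \in \mathcal{M}$.

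Next I carry out three probes. With $X = \begin{pmatrix} a & 0 \\ 0 & 0 \end{pmatrix}$ I compute $[A_0, X] = \begin{pmatrix} a_0 a - a a_0 & -a m_0 \\ 0 & 0 \end{pmatrix}$; the description of $Z(\mathfrak{T})$ forces the $(1,2)$-entry to vanish (giving $\mathcal{A} m_0 = \{0\}$), and the $z_1 m = m z_2$ compatibility applied to this central element forces $[a_0, a]\, m = m \cdot 0 = 0$ for every $m \in \mathcal{M}$, hence $[a_0, a] \mathcal{M} = \{0\}$ and therefore $[a_0, a] = 0$ by the left faithfulness of $\mathcal{M}$. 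A symmetric probe with $X = \begin{pmatrix} 0 & 0 \\ 0 & b \end{pmatrix}$ produces $m_0 \mathcal{B} = \{0\}$ and $[b_0, b] = 0$ by right faithfulness of $\mathcal{M}$. Finally, probing with $X = \begin{pmatrix} 0 & m \\ 0 & 0 \end{pmatrix}$ yields $[A_0, X] = \begin{pmatrix} 0 & a_0 m - m b_0 \\ 0 & 0 \end{pmatrix}$, whose $(1,2)$-entry must vanish, giving $a_0 m = m b_0$ for every $m \in \mathcal{M}$.

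Assembling these pieces gives $a_0 \in Z(\mathcal{A})$, $b_0 \in Z(\mathcal{B})$, $a_0 m = m b_0$ for all $m \in \mathcal{M}$, and $m_0 = 0$ (obtained from whichever of the two $X_{ii}$-probes matches the alternative on $\mathcal{M}$ that one is given). Matched against the description of $Z(\mathfrak{T})$, this is precisely the statement $A_0 \in Z(\mathfrak{T})$. The main obstacle is essentially bookkeeping: keeping track of which of the two alternative hypotheses on $\mathcal{M}$ is being invoked at each step, and confirming that the three elementary probes truly recover \emph{both} the commutativity of the diagonal blocks and the bimodule compatibility condition that together characterize $Z(\mathfrak{T})$; the semiprimeness assumptions on $\mathcal{A}$ and $\mathcal{B}$ do not seem to be needed for the argument itself, the faithfulness properties of $\mathcal{M}$ doing all the cancelling work.
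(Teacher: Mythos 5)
Your proof is correct, and it diverges from the paper's at one substantive point. The paper feeds a general $X=\left[\begin{smallmatrix}a&m\\0&b\end{smallmatrix}\right]$ into the hypothesis, reads off that $a_0a-aa_0\in Z(\mathcal{A})$ and $b_0b-bb_0\in Z(\mathcal{B})$ for all $a,b$, and then invokes Zalar's Lemma 2.1 for semiprime rings to conclude $a_0\in Z(\mathcal{A})$ and $b_0\in Z(\mathcal{B})$; this is the only place semiprimeness enters its argument (the $m_0=0$ and $a_0m=mb_0$ steps are obtained, as in yours, by specializing entries and using the module hypotheses). You instead observe that the central element $[A_0,X_{11}]$ has $(2,2)$-entry $0$, so the compatibility condition $z_1m=mz_2$ built into the description of $Z(\mathfrak{T})$ gives $[a_0,a]\mathcal{M}=\{0\}$, and left faithfulness kills the commutator outright (symmetrically for $b_0$). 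This is a genuinely different and slightly stronger route: it proves the lemma without any semiprimeness assumption on $\mathcal{A}$ or $\mathcal{B}$, replacing the external semiprime-ring lemma with the bimodule compatibility that the paper records but does not exploit. The trade-off is negligible --- your three elementary probes $X_{11},X_{12},X_{22}$ recover exactly the same information as the paper's single general $X$ --- and your bookkeeping of the ``or'' alternative is sound, since your two diagonal probes yield both $\mathcal{A}m_0=\{0\}$ and $m_0\mathcal{B}=\{0\}$, so whichever alternative holds forces $m_0=0$.
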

\begin{proof}Suppose that $\{m \in \mathcal{M} \ | \ \mathcal{A} m =\{0\}\} = \{0\}$. It is easy to that $$Z(\mathfrak{T}) = \Bigg\{\left [\begin{array}{cc}
a & 0\\
0 & b
\end{array}\right ] \ : \ a \in Z(\mathcal{A}), \ b \in Z(\mathcal{B}), am = mb, \  \forall m \in \mathcal{M} \Bigg\}$$
Let $A_0 = \left [\begin{array}{cc}
a_0 & m_0\\
0 & b_0
\end{array}\right ]$ be an element of $\mathfrak{T}$ such that for all $X = \left [\begin{array}{cc}
a & m\\
0 & b
\end{array}\right ] \in \mathfrak{T}$, we have
$$ A_0 X - X A_0 = \left [\begin{array}{cc}
a_0 & m_0\\
0 & b_0
\end{array}\right ]\left [\begin{array}{cc}
a & m\\
0 & b
\end{array}\right ]  - \left [\begin{array}{cc}
a & m\\
0 & b
\end{array}\right ] \left [\begin{array}{cc}
a_0 & m_0\\
0 & b_0
\end{array}\right ] \in Z(\mathfrak{T})$$
Indeed, we have
\begin{align*}
\left [\begin{array}{cc}
a_0 a - a a_0 & a_0 m + m_0 b - a m_0 - m b_0\\
0 & b_0 b - b b_0
\end{array}\right ] \in Z(\mathfrak{T}).
\end{align*}
Hence, we have $a_0 a - a a_0 \in Z(\mathcal{A})$, $b_0 b - b b_0 \in Z(\mathcal{B})$, $a_0 m + m_0 b - a m_0 - m b_0 = 0$ and $(a_0 a - a a_0) m = m (b_0 b - b b_0)$ for all $a \in \mathcal{A}$, $b \in \mathcal{B}$ and $m \in \mathcal{M}$. It follows from \cite[Lemma 2.1]{Z} that $a_0 \in Z(\mathcal{A})$ and $b_0 \in Z(\mathcal{B})$. 
To prove that $A_0 \in Z(\mathfrak{T})$, it is enough to show that $m_0 = 0$ and $a_0 m = m b_0$ for all $m \in \mathcal{M}$. We know that
\begin{align}
a_0 m + m_0 b - a m_0 - m b_0 = 0
\end{align}
for all $a \in \mathcal{A}$, $b \in \mathcal{B}$ and $m \in \mathcal{M}$. Considering $a = 0$ and $b = 0$ in equation (2.2), then we see that $a_0 m = m b_0$ for all $m \in \mathcal{M}$. Letting $m = 0$ in equation (2.2), we have
\begin{align}
m_0 b - a m_0 = 0
\end{align}
for all $a \in \mathcal{A}$ and $b \in \mathcal{B}$. Putting $b = 0$ in equation (2.3), we arrive at $\mathcal{A} m_0 = \{0\}$ and since we are assuming that $\{m \in \mathcal{M} \ | \ \mathcal{A} m =\{0\}\} = \{0\}$, we get that $m_0 = 0$.
Therefore, $a_0 \in Z(\mathcal{A})$, $b_0 \in Z(\mathcal{B})$, $m_0 = 0$ and $a_0 m = m b_0$ for all $m \in \mathcal{M}$. This facts demonstrate that $A_0 = \left [\begin{array}{cc}
a_0 & m_0\\
0 & b_0
\end{array}\right ]  = \left [\begin{array}{cc}
a_0 & 0\\
0 & b_0
\end{array}\right ] \in Z(\mathfrak{T})$, as desired. Using the same argument as stated, we can obtain our result if we suppose that $\{m \in \mathcal{M} \ | \ m \mathcal{B} =\{0\}\} = \{0\}$.
\end{proof}

\begin{lemma}\label{*2} Let $\mathcal{A}$, $\mathcal{B}$ and $\mathcal{M}$ be as Lemma \ref{*1} and let $A_0$ be a fixed element of $\mathfrak{T}$. If $\Psi:\mathfrak{T} \rightarrow \mathfrak{T}$ defined by $\Psi(X) = A_0 X + X A_0$ is a Jordan centralizer, then $A_0 \in Z(\mathfrak{T})$.
\end{lemma}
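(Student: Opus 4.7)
The plan is to exploit one of the two equalities that the Jordan centralizer hypothesis gives, expand both sides using the explicit form $\Psi(Z)=A_0Z+ZA_0$, and reduce the whole statement to Lemma \ref{*1}. I will use $\Psi(X\circ Y)=\Psi(X)\circ Y$, which unfolds to
\begin{align*}
A_0(XY+YX)+(XY+YX)A_0 = (A_0X+XA_0)Y+Y(A_0X+XA_0).
\end{align*}
Canceling the matching terms $A_0XY$ and $YXA_0$ from both sides, what remains is
\begin{align*}
A_0YX+XYA_0 = XA_0Y+YA_0X,
\end{align*}
which can be rearranged as $(A_0Y-YA_0)X=X(A_0Y-YA_0)$ for all $X,Y\in\mathfrak{T}$.

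Next, I would interpret this by fixing $Y$ and letting $X$ vary: the element $A_0Y-YA_0$ commutes with every element of $\mathfrak{T}$, so by definition it lies in $Z(\mathfrak{T})$. Thus $A_0Y-YA_0\in Z(\mathfrak{T})$ for all $Y\in\mathfrak{T}$. At this point Lemma \ref{*1} applies directly (its hypotheses on $\mathcal{A},\mathcal{B},\mathcal{M}$ are exactly those carried over into Lemma \ref{*2}), and it yields $A_0\in Z(\mathfrak{T})$, which is the desired conclusion.

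I do not foresee a genuine obstacle: the argument is a one-line algebraic manipulation followed by an appeal to a lemma already proved. The only thing to be careful about is which of the two identities in the Jordan centralizer condition one uses; either of them works by symmetry, since swapping $X$ and $Y$ in the identity derived above gives the same conclusion. It is also worth noting that the other equality $\Psi(X)\circ Y=X\circ \Psi(Y)$ would, after the same expansion, yield the symmetric relation $(A_0X-XA_0)Y=Y(A_0X-XA_0)$, consistent with and giving no more information than what we already obtained, so a single application of the Jordan centralizer definition is enough.
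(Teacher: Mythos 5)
Your proof is correct and is essentially identical to the paper's: expand $\Psi(X\circ Y)=\Psi(X)\circ Y$, cancel $A_0XY$ and $YXA_0$, rewrite the remainder as $(A_0Y-YA_0)X=X(A_0Y-YA_0)$, and invoke Lemma \ref{*1}. One small inaccuracy in your closing aside (not used in the argument): expanding $\Psi(X)\circ Y=X\circ\Psi(Y)$ actually yields $A_0(XY-YX)=(XY-YX)A_0$, not $(A_0X-XA_0)Y=Y(A_0X-XA_0)$, but this does not affect the proof.
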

\begin{proof}
We know that $\Psi(XY + YX) = \Psi(X)Y + Y \Psi(X)$ for all $X, Y \in \mathfrak{T}$. Therefore, we have
\begin{align*}
A_0 XY + A_0 YX + X Y A_0 + YX A_0 = A_0 XY + X A_0 Y + Y A_0 X + YX A_0
\end{align*}
for all $X, Y \in \mathfrak{T}$. Previous equation gives us:
\begin{align*}
0 & =  A_0 YX + X Y A_0 - X A_0 Y - Y A_0 X \\ & = (A_0 Y - YA_0)X - X(A_0 Y - YA_0)
\end{align*}
for all $X, Y \in \mathfrak{T}$. This means that $A_0 Y - YA_0 \in Z(\mathfrak{T})$ for all $Y \in \mathfrak{T}$. Then in view of Lemma \ref{*1}, $A_0 \in Z(\mathfrak{T})$. This proves the lemma.
\end{proof}

\begin{lemma}\label{*3} Let $\mathcal{A}$, $\mathcal{B}$ and $\mathcal{M}$ be as Lemma \ref{*1}. Then every Jordan centralizer on $\mathfrak{T}$ maps $Z(\mathfrak{T})$ into itself.
\end{lemma}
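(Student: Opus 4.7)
The plan is to reduce the claim to Lemma \ref{*2}. Fix an arbitrary $C \in Z(\mathfrak{T})$, and write $T$ for the Jordan centralizer in question; I want to show $T(C) \in Z(\mathfrak{T})$. To this end, consider the auxiliary additive map $\Psi : \mathfrak{T} \to \mathfrak{T}$ defined by
\[
\Psi(X) = T(C)\,X + X\,T(C).
\]
This has exactly the form $A_0 X + X A_0$ appearing in the hypothesis of Lemma \ref{*2}, with $A_0 = T(C)$. Therefore, once I show that $\Psi$ is itself a Jordan centralizer on $\mathfrak{T}$, Lemma \ref{*2} immediately delivers $T(C) \in Z(\mathfrak{T})$.

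The observation that makes this reduction work is the following. Applying the defining identity of $T$ at the pair $(C,X)$ gives
\[
T(C \circ X) = T(C) \circ X = C \circ T(X).
\]
Since $C \in Z(\mathfrak{T})$, the element $C$ commutes with every element of $\mathfrak{T}$, so $C \circ T(X) = 2 C T(X)$. Hence
\[
\Psi(X) = T(C) \circ X = 2 C T(X).
\]
Thus $\Psi$ has two equivalent descriptions: the original $\Psi(X) = T(C) X + X T(C)$ (which is what Lemma \ref{*2} consumes) and the alternate $\Psi(X) = 2 C T(X)$ (which is convenient for verifying the Jordan centralizer property).

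Using the second description, the Jordan centralizer property of $T$, and centrality of $C$, I would verify
\[
\Psi(X \circ Y) = 2 C\, T(X \circ Y) = 2 C \bigl(T(X) \circ Y\bigr),
\]
and, pushing $C$ past arbitrary elements via $XC = CX$,
\[
\Psi(X) \circ Y = \bigl(2CT(X)\bigr) \circ Y = 2 C\bigl(T(X) \circ Y\bigr),
\]
\[
X \circ \Psi(Y) = X \circ \bigl(2CT(Y)\bigr) = 2 C\bigl(X \circ T(Y)\bigr) = 2 C\bigl(T(X) \circ Y\bigr),
\]
where in the last equality I invoked the Jordan centralizer identity $X \circ T(Y) = T(X) \circ Y$. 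All three quantities coincide, so $\Psi$ is a Jordan centralizer, and Lemma \ref{*2} then yields $T(C) \in Z(\mathfrak{T})$.

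The computations involved are essentially routine; the only real obstacle is spotting the correct auxiliary map. The choice $\Psi(X) = T(C) X + X T(C)$ works precisely because the Jordan centralizer identity applied at the central element $C$ both produces an expression of the form required by Lemma \ref{*2} and, via the centrality of $C$, automatically acquires the alternate form $2 C T(X)$, which makes it visibly a Jordan centralizer.
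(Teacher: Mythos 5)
Your proposal is correct and follows essentially the same route as the paper: both define the auxiliary map $X \mapsto T(C)X + XT(C)$, rewrite it via the Jordan centralizer identity at $(C,X)$ together with the centrality of $C$ (the paper as $2T(CX)$, you as $2CT(X)$ — the same quantity), verify the Jordan centralizer property, and invoke Lemma \ref{*2}. Your verification is in fact slightly more complete, since you check both equalities $\Psi(X\circ Y)=\Psi(X)\circ Y=X\circ\Psi(Y)$ whereas the paper checks only the first.
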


\begin{proof}Let $C$ be an arbitrary fixed element of $Z(\mathfrak{T})$ and let $\Psi: \mathfrak{T} \rightarrow \mathfrak{T}$ be a Jordan centralizer. We have to show that $\Psi(C) \in Z(\mathfrak{T})$. Letting $A_0 = \Psi(C)$, we have
\begin{align*}
2\Psi(CX) & = \Psi(2CX) \\ & = \Psi(CX + XC) \\ & = \Psi(C) X + X\Psi(C) \\ & = A_0 X + X A_0
\end{align*}
We define a mapping $\Lambda: \mathfrak{T} \rightarrow \mathfrak{T}$ by $\Lambda(X) = 2\Psi(CX)$. We have the following expressions:
\begin{align*}
\Lambda(XY + YX) & = 2\Psi(C(XY + YX)) \\ & = 2 \Psi(CXY + YCX) \\ & = 2 \Psi(CX)Y + 2 Y \Psi(CX) \\ & = \Lambda(X)Y + Y \Lambda(X)
\end{align*}
for all $X, Y \in \mathfrak{T}$, which means that $\Lambda(X) = A_0 X + X A_0$ is a Jordan centralizer. By Lemma \ref{*2}, $A_0 = \Psi(C) \in Z(\mathfrak{T})$, as desired.
\end{proof}

\begin{lemma}\label{*4} Let $\mathcal{A}$, $\mathcal{B}$ and $\mathcal{M}$ be as Lemma \ref{*1}. If $A_0 \in Z(\mathfrak{T})$ and $A_0^{2} = 0$, then $A_0 = 0$.
\end{lemma}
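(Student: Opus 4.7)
The plan is to reduce the statement to a simple fact about central nilpotent elements in semiprime rings, using the explicit description of $Z(\mathfrak{T})$ that already appeared in the proof of Lemma~\ref{*1}.

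First, I would recall from the proof of Lemma~\ref{*1} that
\[
Z(\mathfrak{T}) = \left\{\left[\begin{array}{cc} a & 0 \\ 0 & b \end{array}\right] : a \in Z(\mathcal{A}),\ b \in Z(\mathcal{B}),\ am = mb \text{ for all } m \in \mathcal{M}\right\}.
\]
So writing $A_0 = \left[\begin{smallmatrix} a_0 & 0 \\ 0 & b_0 \end{smallmatrix}\right]$, the hypothesis $A_0^2 = 0$ translates into the two scalar conditions $a_0^2 = 0$ in $\mathcal{A}$ and $b_0^2 = 0$ in $\mathcal{B}$.

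Next, I would exploit centrality together with semiprimeness. Since $a_0 \in Z(\mathcal{A})$, for any $x \in \mathcal{A}$ we have $a_0 x a_0 = a_0^2 x = 0$. Hence $a_0 \mathcal{A} a_0 = \{0\}$, and semiprimeness of $\mathcal{A}$ forces $a_0 = 0$. The same argument applied to $\mathcal{B}$ gives $b_0 = 0$. Therefore $A_0 = 0$.

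The proof is essentially free once the centre of $\mathfrak{T}$ has been identified; there is no real obstacle beyond invoking the description of $Z(\mathfrak{T})$ already recorded in Lemma~\ref{*1} and the standard observation that a central square-zero element of a semiprime ring must vanish. No use of the faithful bimodule hypotheses on $\mathcal{M}$ is required here beyond what is already baked into the form of the central elements.
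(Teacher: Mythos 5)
Your proposal is correct and follows essentially the same route as the paper: write $A_0$ using the description of $Z(\mathfrak{T})$, deduce $a_0^2=0$ and $b_0^2=0$, and then use centrality to get $a_0\mathcal{A}a_0=\{0\}$ (resp.\ $b_0\mathcal{B}b_0=\{0\}$) so that semiprimeness forces $a_0=0$ and $b_0=0$. No discrepancies worth noting.
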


\begin{proof}Let $A_0 = \left [\begin{array}{cc}
a_0 & 0\\
0 & b_0
\end{array}\right ] \in Z(\mathfrak{T})$ such that $A_0^{2} = 0$. We know that $A_0 \in Z(\mathfrak{T})$, $a_0 \in Z(\mathcal{A})$, $b_0 \in Z(\mathcal{B})$, $a_0 m = m b_0$ for all $m \in \mathcal{M}$ and also $A_0^{2} = 0$. So, we have $a_0^{2} = 0$ and $b_0^{2} = 0$. Note that $0 = a a_0^{2} = a_0 a a_0$ for all $a \in \mathcal{A}$ and consequently, $a_0 = 0$, since $a_0 \in Z(\mathcal{A})$ and $\mathcal{A}$ is semiprime. Similarly, we can show that $b_0 = 0$ and it implies that $A_0 = 0$.
\end{proof}

We are now in a position to establish another main theorem of this article which has been motivated by \cite{Z}.

\begin{theorem}\label{*5} Let $\mathcal{A}$ and $\mathcal{B}$ be 2-torsion free semiprime rings and let $\mathcal{M}$ be as Theorem \ref{1}. Then every Jordan centralizer of $\mathfrak{T}$ is a centralizer.
\end{theorem}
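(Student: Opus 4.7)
My plan is to reduce the statement to Theorem \ref{1} by demonstrating that, under the present hypotheses, every Jordan centralizer $\Psi$ is already a Jordan two-sided centralizer. Note that 2-torsion free semiprime rings $\mathcal{A}$ and $\mathcal{B}$ a fortiori satisfy the ``semiprime or Condition (P)'' hypothesis of Theorem \ref{1}, so once $\Psi(X)X = X\Psi(X)$ is known for all $X \in \mathfrak{T}$, Theorem \ref{1} directly yields that $\Psi$ is a centralizer.

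First I would derive a cubic commutator identity. Setting $Y = X$ in $\Psi(X \circ Y) = \Psi(X) \circ Y$ and using that $\mathfrak{T}$ is 2-torsion free (inherited from $\mathcal{A}$, $\mathcal{B}$, $\mathcal{M}$) yields
\[ 2 \Psi(X^2) = \Psi(X) X + X \Psi(X). \]
Next, substituting $Y = X^2$ into $\Psi(X) \circ Y = X \circ \Psi(Y)$ and eliminating $\Psi(X^2)$ via the preceding formula, a short direct computation gives
\[ \Psi(X) X^2 - 2 X \Psi(X) X + X^2 \Psi(X) = 0, \qquad\text{i.e.,}\qquad [[\Psi(X), X], X] = 0 \]
for every $X \in \mathfrak{T}$.

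The main obstacle is to upgrade $[[\Psi(X), X], X] = 0$ to $[\Psi(X), X] = 0$, since $\mathfrak{T}$ is not semiprime in general (indeed $\mathfrak{T}_{12}^2 = 0$), so the standard Posner/Zalar-type semiprime argument does not apply directly. I would polarise by $X \mapsto X + Y$, extract the component linear in $Y$ via $Y \mapsto -Y$ and 2-torsion freeness, and obtain
\[ [[\Psi(X), X], Y] + [[\Psi(X), Y], X] + [[\Psi(Y), X], X] = 0 \qquad (X, Y \in \mathfrak{T}). \]
Combining this with Lemma \ref{*3} (so that $\Psi$ preserves $Z(\mathfrak{T})$) and the auxiliary construction from its proof (the map $Y \mapsto 2\Psi(CY) = \Psi(C) Y + Y \Psi(C)$ is a Jordan centralizer for each $C \in Z(\mathfrak{T})$), and then invoking Lemmas \ref{*1} and \ref{*2} to control commutators that become central, I expect to force $[\Psi(X), X] \in Z(\mathfrak{T})$ for every $X$. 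A further computation, exploiting the fact that $[\Psi(X), X]$ then commutes with $X$ and $[[\Psi(X), X], X] = 0$, should reveal that this central element squares to zero, so Lemma \ref{*4} forces $[\Psi(X), X] = 0$.

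Once this is established, the identity $2\Psi(X^2) = \Psi(X) X + X \Psi(X)$ collapses (using 2-torsion freeness) to $\Psi(X^2) = \Psi(X) X = X \Psi(X)$, so $\Psi$ is a Jordan two-sided centralizer on $\mathfrak{T}$, and Theorem \ref{1} completes the proof. The delicate combinatorial work lies precisely in orchestrating Lemmas \ref{*1}--\ref{*4} around the linearised cubic commutator so that semiprimeness of $\mathcal{A}$ and $\mathcal{B}$, together with faithfulness of $\mathcal{M}$, is lifted to the level of $\mathfrak{T}$.
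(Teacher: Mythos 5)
Your overall strategy coincides with the paper's: show that a Jordan centralizer is a Jordan two-sided centralizer and then invoke Theorem \ref{1}. The opening identity $2\Psi(X^2)=\Psi(X)X+X\Psi(X)$ and the final reduction are fine. But the heart of the argument --- passing from your cubic identity to $[\Psi(X),X]=0$ --- is not actually carried out: you say you ``expect to force'' $[\Psi(X),X]\in Z(\mathfrak{T})$ from the linearization of $[[\Psi(X),X],X]=0$ together with Lemmas \ref{*1}--\ref{*4}, and that a further computation ``should reveal'' that this central element squares to zero. Neither step is supplied, and the route you chose makes them genuinely hard: the linearized three-term identity in $X,Y$ does not in any evident way isolate $[[\Psi(X),X],Y]$, so centrality of $[\Psi(X),X]$ does not follow from it, and Lemmas \ref{*1} and \ref{*2} (which concern inner maps $X\mapsto A_0X\pm XA_0$ for a \emph{fixed} $A_0$) do not obviously apply to the $X$-dependent element $[\Psi(X),X]$.

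The paper gets both missing pieces by exploiting the \emph{two-variable} Jordan centralizer identity rather than only its one-variable consequences. Substituting $Y\mapsto XY+YX$ into $\Psi(X\circ Y)=\Psi(X)\circ Y=X\circ\Psi(Y)$ and cancelling gives directly $[\Psi(X),X]\,Y=Y\,[\Psi(X),X]$ for all $Y$, i.e.\ $[\Psi(X),X]\in Z(\mathfrak{T})$ --- a one-line computation that your approach discards by specializing to $Y=X^2$ too early. For the nilpotency, the paper first proves $\Psi(XC)=\Psi(X)C$ and, via Lemma \ref{*3}, $\Psi(CX)=\Psi(C)X$ for every $C\in Z(\mathfrak{T})$, hence $\Psi(X)C=\Psi(C)X$; a short computation then gives $[\Psi(X),X]\,C=0$ for all central $C$, and taking $C=[\Psi(X),X]$ yields $[\Psi(X),X]^2=0$, so Lemma \ref{*4} finishes. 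You should replace your speculative middle section with these two substitutions (or supply a complete argument of your own); as written the proof has a genuine gap exactly where the difficulty lies.
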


\begin{proof} Let $\Psi:\mathfrak{T} \rightarrow \mathfrak{T}$ be a Jordan centralizer, i.e.
\begin{align}
\Psi(XY + YX) = \Psi(X)Y + Y\Psi(X) = X\Psi(Y) + \Psi(Y)X
\end{align}
for all $X, Y \in \mathfrak{T}$. If we replace $Y$ by $XY + YX$ in (2.4), we get
\begin{align*}
\Psi(X)(XY + YX) + (XY + YX)\Psi(X) & = \Psi(XY + YX)X + X\Psi(XY +YX) \\ & = (\Psi(X)Y + Y\Psi(X))X + X(\Psi(X)Y \\ & + Y\Psi(X)).
\end{align*}
Hence, we deduce that $[\Psi(X), X]Y = Y[\Psi(X), X]$ for all $X, Y \in \mathfrak{T}$, which means that $[\Psi(X), X] \in Z(\mathfrak{T})$ for any $X \in \mathfrak{T}$. Our next task is to show that $[\Psi(X), X] = 0$ for all $X \in \mathfrak{T}$. For any $C \in Z(\mathfrak{T})$ and $X \in \mathfrak{T}$, we have
$$2 \Psi(XC) = \Psi(XC + CX) = \Psi(X) C + C \Psi(X) =  2\Psi(X)C,$$
which means that
\begin{align}
\Psi(XC) = \Psi(X)C, \ \ \ \ \ (X \in \mathfrak{T})
\end{align}

Using Lemma \ref{*3}, we get that
$$2 \Psi(CX) = \Psi(XC + CX) = \Psi(C)X + X\Psi(C) =  2\Psi(C)X,$$
which means that
\begin{align}
\Psi(XC) = \Psi(CX) = \Psi(C)X, \ \ \ \ \ (X \in \mathfrak{T})
\end{align}
Comparing (2.5) and (2.6), we infer that
\begin{align}
\Psi(X)C = \Psi(CX) = \Psi(C)X, \ \ \ \ \ (X \in \mathfrak{T})
\end{align}
Using (2.7) and Lemma \ref{*3}, we have
\begin{align*}
[\Psi(X), X]C & = \Psi(X) XC - X\Psi(X)C \\ & = \Psi(X)CX - X\Psi(X)C \\ & = \Psi(C)X^{2} - X\Psi(C)X \\ & = \Psi(C)X^{2} - \Psi(C)X^{2} \\ & = 0
\end{align*}
for any $X \in \mathfrak{T}$ and $C \in Z(\mathfrak{T})$. From previous equation and using the fact that $[\Psi(X), X] \in Z(\mathfrak{T})$ for all $X \in \mathfrak{T}$, we get that $[\Psi(X), X]^{2} = 0$ for all $X \in \mathfrak{T}$. It follows from Lemma \ref{*4} that $[\Psi(X), X] = 0$ for all $X \in \mathfrak{T}$. So, we have
\begin{align*}
2 \Psi(X^2) & = \Psi(XX + XX) \\ & = \Psi(X)X + X\Psi(X) \\ & = 2\Psi(X)X,
\end{align*}
which implies that $\Psi(X^2) = \Psi(X)X$ for all $X \in \mathfrak{T}$. Simimlarly, we can obtain that $\Psi(X^2) = X\Psi(X)$ for all $X \in \mathfrak{T}$. It means that $\Psi$ is a Jordan two-sided centralizer on $\mathfrak{T}$. In view of Theorem \ref{1}, we get that $\Psi$ is a centralizer on $\mathfrak{T}$.
\end{proof}

Immediate consequence of Theorem \ref{*5} reads as follows.

\begin{corollary}\label{*6} Let $\mathcal{A}$, $\mathcal{B}$ and $\mathcal{M}$ be as Theorem \ref{*5} and let $\Phi:\mathfrak{T} \rightarrow \mathfrak{T}$ be a Jordan generalized derivation associated with a Jordan derivation $\Delta$, that is $\Phi(X \circ Y) = \Phi(X) \circ Y + X \circ \Delta(Y)$ and $\Delta(X \circ Y) = \Delta(X) \circ Y + X \circ \Delta(Y)$ for all $X, Y \in \mathfrak{T}$. Then $\Phi$ is a two-sided generalized derivation associated with the derivation $\Delta$.
\end{corollary}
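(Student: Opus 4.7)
The plan is to mirror the proof of Corollary \ref{****}: reduce the problem to Jordan centralizers and derivations, then invoke Theorem \ref{*5} and the Main Theorem of \cite{F}. Concretely, define $\Psi = \Phi - \Delta : \mathfrak{T} \to \mathfrak{T}$, and my goal is to show that $\Psi$ is a Jordan centralizer in the sense defined in the paper.

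First I would verify the two defining identities for $\Psi$ to be a Jordan centralizer. The identity $\Psi(X \circ Y) = \Psi(X) \circ Y$ follows by direct subtraction: the hypothesis gives $\Phi(X \circ Y) = \Phi(X) \circ Y + X \circ \Delta(Y)$ and $\Delta(X \circ Y) = \Delta(X) \circ Y + X \circ \Delta(Y)$, so the $X \circ \Delta(Y)$ terms cancel and we obtain $\Psi(X \circ Y) = (\Phi(X) - \Delta(X)) \circ Y = \Psi(X) \circ Y$. For the second identity $\Psi(X \circ Y) = X \circ \Psi(Y)$, I would exploit the commutativity of the Jordan product. Since $X \circ Y = Y \circ X$, applying the hypothesis with the roles of $X$ and $Y$ swapped gives $\Phi(X \circ Y) = \Phi(Y) \circ X + Y \circ \Delta(X)$ and likewise for $\Delta$; subtracting yields $\Psi(X \circ Y) = \Psi(Y) \circ X = X \circ \Psi(Y)$. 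Thus $\Psi$ is a Jordan centralizer on $\mathfrak{T}$.

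Next I would invoke Theorem \ref{*5} (whose hypotheses on $\mathcal{A}$, $\mathcal{B}$, $\mathcal{M}$ are precisely those assumed here) to conclude that $\Psi$ is a centralizer, so $\Psi(AB) = \Psi(A)B = A\Psi(B)$ for all $A, B \in \mathfrak{T}$. Separately, because $\Delta$ is a Jordan derivation on the triangular ring $\mathfrak{T}$, the Main Theorem of \cite{F} (used the same way in Corollary \ref{****}) guarantees that $\Delta$ is in fact a derivation: $\Delta(AB) = \Delta(A)B + A\Delta(B)$.

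Finally I would reassemble $\Phi = \Psi + \Delta$. Using $\Psi(AB) = \Psi(A)B$ together with the Leibniz rule for $\Delta$ gives
\begin{align*}
\Phi(AB) = \Psi(A)B + \Delta(A)B + A\Delta(B) = \Phi(A)B + A\Delta(B),
\end{align*}
and using $\Psi(AB) = A\Psi(B)$ instead gives $\Phi(AB) = A\Phi(B) + \Delta(A)B$, establishing that $\Phi$ is a two-sided generalized derivation associated with the derivation $\Delta$. The only slightly delicate point — and the one I would flag as the ``main obstacle,'' though it is a mild one — is noticing that both halves of the Jordan centralizer identity for $\Psi$ are genuinely needed and that the second half is not part of the hypothesis but must be extracted from the symmetry $X \circ Y = Y \circ X$; beyond that, the argument is a clean bookkeeping exercise once Theorem \ref{*5} and \cite{F} are in hand.
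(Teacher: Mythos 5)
Your proposal is correct and follows essentially the same route as the paper: set $\Psi = \Phi - \Delta$, check it is a Jordan centralizer (the paper dismisses this as ``obvious''; your symmetry argument via $X \circ Y = Y \circ X$ is the right justification), apply Theorem \ref{*5} and the Main Theorem of \cite{F}, and recombine. No gaps.
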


\begin{proof} Obviously, $\Psi = \Phi - \Delta$ is a Jordan centralizer on $\mathfrak{T}$. Using Theorem \ref{*5}, we infer that $\Psi$ is a centralizer and it follows from the Main Theorem of \cite{F} that $\Delta$ is a derivation on $\mathfrak{T}$. Therefore, we have
\begin{align*}
\Phi(XY) & = \Delta(XY) + \Psi(XY) \\ & = \Delta(X) Y + X \Delta(Y) + \Psi(X)Y \\&  = \Phi(X)Y + X \Delta(Y),
\end{align*}
and also
\begin{align*}
\Phi(XY) & = \Delta(XY) + \Psi(XY) \\ & = \Delta(X) Y + X \Delta(Y) + X\Psi(Y) \\&  = \Delta(X)Y + X \Phi(Y),
\end{align*}
for all $X, Y \in \mathfrak{T}$, which means that $\Phi$ is a two-sided generalized derivation on $\mathfrak{T}$. This is the required result.
\end{proof}

\bibliographystyle{amsplain}

\end{document}